\title[The Rouquier dimension for a regular ring]{The Rouquier dimension of the category of perfect complexes over a regular ring}
\date{\today}
\keywords{Rouquier dimension, Triangulated category, Regular ring, Regular sequence}
\subjclass[2020]{18G80 (primary), 13D09, 13H05} 
\author[J.C.~Letz]{Janina~C.~Letz}
\address{Janina~C.~Letz,
Faculty of Mathematics,
Bielefeld University,
PO Box 100 131,
33501 Bielefeld,
Germany}
\email{jletz@math.uni-bielefeld.de}
\begin{document}

\begin{abstract}
We show that the Rouquier dimension of the category of perfect complexes over a regular ring is precisely the Krull dimension of the ring. 
Previously, it was known that the Krull dimension is an upper bound, the lower bound however was not known in general.
In particular, for regular local rings this result is new. 
More generally, we show that a lower bound of the Rouquier dimension is given by the maximal length of a regular sequence.
\end{abstract}

\maketitle

% \tableofcontents

%%%%%%%%%%%%%%%%%%%%%%%%%%%%%%%%%%%%%%%%%%%%%%%%%%%%%%%%%%%
%%%%%%%%%%%%%%%%%%%%%%%%%%%%%%%%%%%%%%%%%%%%%%%%%%%%%%%%%%%
\section*{Introduction}

The Rouquier dimension measures the complexity of a triangulated category. It is the infimum, taken over all strong generators, of the generation time. Determining the Rouquier dimension for a triangulated category is generally rather difficult. It has been determined for some bounded derived categories; see for example \cite{Rouquier:2006,Rouquier:2008,Chen/Ye/Zhang:2008,Elagin:2022}. However, often only upper or lower bounds are known; see for example \cite{Krause/Kussin:2006,Rouquier:2008,Oppermann:2009,Aihara/Takahashi:2015,Huang/Zheng:2020}. For the category of perfect complexes over a regular ring we are able to determine the Rouquier dimension explicitly.

\begin{introthm} 
Let $A$ be a commutative noetherian regular ring. Then 
\begin{equation*}
\Rdim(\Perf{A}) = \dim(A)\,.
\end{equation*}
\end{introthm}

This result was previously known when $A$ is a regular ring of finite type over a field; see \cite[Theorem~7.17]{Rouquier:2008}. However, for general regular rings, it was only known that the Krull dimension $\dim(A)$ is an upper bound; see \cite{Krause/Kussin:2006}. We show that the Krull dimension $\dim(A)$ is a lower bound for any regular ring $A$. 

More generally, we consider triangulated categories with an action by a graded-commutative ring. Then depth, that is the maximal length of a regular sequence, provides a lower bound for Rouquier dimension.

\begin{introthm}
Let $R$ be a graded-commutative ring and $\cat{T}$ an Ext-noetherian $R$-linear triangulated category. Let $M \in \cat{T}$ and $\fa \subseteq R$ an ideal with $\fa \Hom[*]{\cat{T}}{M}{M} \neq \Hom[*]{\cat{T}}{M}{M}$. Then
\begin{equation*}
\depth_R(\fa,\Hom[*]{\cat{T}}{M}{M}) \leq \Rdim(\cat{T})\,.
\end{equation*}
\end{introthm}

This result is contained in \cref{depth_leq_Rdim}. Similar as to \cite{Bergh/Iyengar/Krause/Oppermann:2010,Bai/Cote:2023}, we construct a non-zero composition of ghost maps from a regular sequence. Unlike \cite{Bergh/Iyengar/Krause/Oppermann:2010,Bai/Cote:2023}, we construct ghost maps from arbitrary elements; see \cref{kos_map_G_ghost}. The regularity of the sequence ensures that the composition of ghost maps is non-zero. 

% we construct different ghost maps. For our construction the regularity of the sequence only depends on $M$ and not on the object $G$, for which the maps will be ghost. For this reason, unlike \cite{Bergh/Iyengar/Krause/Oppermann:2010,Bai/Cote:2023}, we do not require any assumptions for $\cat{T}$ or $R$ that ensure the existence of regular sequences of a certain length. 

% we do not require that the sequence is regular with respect to the graded Hom-set $\Hom[*]{\cat{T}}{G}{M}$ where $G$ is the generator. Instead, regularity of the sequence is only related to $\Hom[*]{\cat{T}}{M}{M}$. As a consequence, we only need that $\cat{T}$ is Ext-finite. 

\begin{ack}
This work is inspired by joint work with Antonia Kekkou and Marc Stephan. 
I thank Srikanth Iyengar for helpful discussions

The author was partly funded by the Deutsche Forschungsgemeinschaft (DFG, German Research Foundation)---Project-ID 491392403---TRR 358. 
\end{ack}

%%%%%%%%%%%%%%%%%%%%%%%%%%%%%%%%%%%%%%%%%%%%%%%%%%%%%%%%%%%
%%%%%%%%%%%%%%%%%%%%%%%%%%%%%%%%%%%%%%%%%%%%%%%%%%%%%%%%%%%
\section{Rouquier dimension}

Let $\cat{T}$ be a triangulated category. Subcategories are assumed to be strict and full. 

%%%%%%%%%%%%%%%%%%%%%%%%%%%%%%%%%%%%%%%%%%%%%%%%%%%%%%%%%%%
\subsection{Level}

Let $\cat{T}$ be a triangulated category and $G \in \cat{T}$. We denote by $\thick_\cat{T}(G)$ the smallest thick subcategory of $\cat{T}$ that contains $G$.
By \cite[Section~2.2]{Bondal/VanDenBergh:2003}, such a thick subcategory is equipped with a filtration given by
\begin{equation*}
\thick^n(G) = \smd(\add(G)^{\star n}) = \smd(\add(G) \star \cdots \star \add(G))
\end{equation*}
for any $n \geq 0$. Here, $\smd(-)$ denotes the closure under direct summands and $\add(-)$ denotes the closure under finite direct sums, suspensions and desuspensions. Further, for subcategories $\cat{A}$ and $\cat{B}$, the subcategory $\cat{A} \star \cat{B}$ consists of all objects $M$ such that there is an exact triangle $A \to M \to B \to \susp A$ with $A \in \cat{A}$ and $B \in \cat{B}$. 

Following \cite[2.3]{Avramov/Buchweitz/Iyengar/Miller:2010}, for any $M \in \cat{T}$, the \emph{$G$-level of $M$} is
\begin{equation*}
\level_\cat{T}^G(M) \coloneqq \inf\set{n | M \in \thick_\cat{T}^n(G)}.
\end{equation*}
One can think of $\level_\cat{T}^G(M)$ as the number of extensions necessary to build $M$ from $G$. 
One has
\begin{equation*}
\level_\cat{T}^G(M) < \infty \iff M \in \thick_\cat{T}(G)\,.
\end{equation*}

%%%%%%%%%%%%%%%%%%%%%%%%%%%%%%%%%%%%%%%%%%%%%%%%%%%%%%%%%%%
\subsection{Generation time and Rouquier dimension}

When $\thick_\cat{T}(G) = \cat{T}$, the object $G$ is a \emph{generator} of $\cat{T}$. The \emph{generation time} of a generator $G$ is
\begin{equation*}
\gentime_\cat{T}(G) \coloneqq \sup \set{\level_\cat{T}^G(X)-1 | X \in \cat{T}}.
\end{equation*}
If $\gentime_\cat{T}(G) < \infty$, meaning there is a universal upper bound for $G$-level, then $G$ is a \emph{strong generator}. 

When the triangulated category $\cat{T}$ has a strong generator, every generator is strong. In this case, one is interested in minimal generation time possible. The \emph{Rouquier dimension} of a triangulated category $\cat{T}$ is defined as
\begin{equation*}
\Rdim(\cat{T}) \coloneqq \inf\set{\gentime_\cat{T}(G) | G \text{ a generator of } \cat{T}}.
\end{equation*}
This invariant was introduced in \cite[Definition~3.2]{Rouquier:2008}. 

%%%%%%%%%%%%%%%%%%%%%%%%%%%%%%%%%%%%%%%%%%%%%%%%%%%%%%%%%%%
\subsection{Ghost maps} \label{sec:ghost}

A morphism $f \colon M \to N$ is \emph{$G$-ghost}, if the composition $\susp^n G \to M \xrightarrow{f} N$ is zero for any morphism $\susp^n G \to M$ and any $n \in \BZ$. Non-zero ghost maps provide an obstruction to generation: If there exists a non-zero composition 
\begin{equation*}
M = M_0 \to M_1 \to M_2 \to \cdots \to M_{t-1} \to M_t
\end{equation*}
where each morphism is $G$-ghost, then $\level_\cat{T}^G(M) > t$. This is the ghost lemma; see for example \cite[Lemma~2.2]{Beligiannis:2008}. 

%%%%%%%%%%%%%%%%%%%%%%%%%%%%%%%%%%%%%%%%%%%%%%%%%%%%%%%%%%%
%%%%%%%%%%%%%%%%%%%%%%%%%%%%%%%%%%%%%%%%%%%%%%%%%%%%%%%%%%%
\section{Depth and Rouquier dimension}

Let $R$ be a graded-commutative ring.
We think of $R$ as a disjoint union of its graded components, meaning every element is homogeneous and every ideal is generated by homogeneous elements.
For $x \in R$, we denote by $|x|$ the degree of $x$.

A triangulated category $\cat{T}$ is \emph{$R$-linear}, if the graded Hom-sets
\begin{equation*}
\Hom[*]{\cat{T}}{M}{N} \coloneqq \coprod_{n \in \BZ} \Hom{\cat{T}}{M}{\susp^n N}
\end{equation*}
are graded $R$-modules and the composition is $R$-bilinear. If $\Hom[*]{\cat{T}}{M}{N}$ is noetherian as an $R$-module for all objects $M,N \in \cat{T}$, then we say the $R$-linear triangulated category $\cat{T}$ is \emph{Ext-noetherian}. 

%%%%%%%%%%%%%%%%%%%%%%%%%%%%%%%%%%%%%%%%%%%%%%%%%%%%%%%%%%%
\subsection{Koszul objects}

Let $R$ be a graded-commutative ring and $\cat{T}$ an $R$-linear triangulated category.
Any $x \in R$ provides a natural morphism
\begin{equation*}
x = x(M) = x \cdot \id_M \colon M \to \susp^{|x|} M\,.
\end{equation*}

The \emph{Koszul object} of a sequence $x_1, \ldots, x_t \in R$ on an object $M$ is
\begin{equation*}
\kosobj{M}{(x_1, \ldots, x_t)} \coloneqq \begin{cases}
M & t = 0\,, \\
\cone(x(M)) & t = 1\,, \\
\kosobj{(\kosobj{M}{(x_1, \ldots, x_t)})}{x_t} & t > 1\,.
\end{cases}
\end{equation*}
That is, the Koszul object $\kosobj{M}{x}$ with respect to an element $x \in R$ is constructed through the exact triangle
\begin{equation*}
\susp^{-1} \kosobj{M}{x} \xrightarrow{\epsilon(x)} M \xrightarrow{x} \susp^{|x|} M \to \kosobj{M}{x}\,.
\end{equation*}
We call this the \emph{defining exact triangle of $\kosobj{M}{x}$}. The morphism $\epsilon(x)$ plays a central role in the following. It need not be natural in $M$.

%%%%%%%%%%%%%%%%%%%%%%%%%%%%%%%%%%%%%%%%%%%%%%%%%%%%%%%%%%%
\subsection{Regular sequences}

By \cite{Kekkou/Letz/Stephan:2025}, for $M \in \cat{T}$ and a sequence $x_1, \ldots, x_t \in R$, the following are equivalent
\begin{enumerate}
\item \label{regular_equiv:end} $x_1, \ldots, x_t$ is a $\Hom[*]{\cat{T}}{M}{M}$-regular sequence;
\item Each morphism in the composition
\begin{equation*}
\susp^{-t} \kosobj{M}{(x_1, \ldots, x_t)} \xrightarrow{\epsilon(x_t)} \susp^{-t+1} \kosobj{M}{(x_1, \ldots, x_{t-1})} \to \cdots \to \susp^{-1} \kosobj{M}{x_1} \xrightarrow{\epsilon(x_1)} M
\end{equation*}
is $M$-ghost and the composition is non-zero.
\end{enumerate}
If these conditions hold, we say $x_1, \ldots, x_t$ is a \emph{$M$-regular sequence}. In this paper we only use condition \cref{regular_equiv:end}. 

For an ideal $\fa \subseteq R$ the \emph{$\fa$-depth} $\depth_R(\fa,\cM)$ of a graded $R$-module $\cM$ is the supremum over the length of $\cM$-regular sequences in $\fa$ when $\fa \cM \neq \cM$ and $\infty$ otherwise. 

% \begin{equation*}
% \depth_R(\fa,M) \coloneqq \depth_R(\fa,\Hom[*]{\cat{T}}{M}{M})\,,
% \end{equation*}
% which is the supremum over the length of an $M$-regular sequence in $\fa$ when
% \begin{equation*}
% \fa \Hom[*]{\cat{T}}{M}{M} \neq \Hom[*]{\cat{T}}{M}{M}
% \end{equation*}and $\infty$ otherwise.

% \begin{remark}
% In \cite[tbd]{Kekkou/Letz/Stephan:2025} an $M$-regular sequence is called a $(M,M)$-regular sequence. 
% \end{remark}

For a triangulated category, which has a generator, it is always possible to construct a generator such that its endomorphism ring has depth zero. 
% A (strong) generator of a triangulated category can have arbitrarily low depth.
However, for any object $M \in \cat{T}$, an element $x \in R$ induces a $G$-ghost map with source $\kosobj{M}{x^n}$ for some $n \gg 0$. 

\begin{lemma} \label{kos_map_G_ghost}
Let $R$ be a graded-commutative ring and $\cat{T}$ an $R$-linear triangulated category.
We fix $G,M \in \cat{T}$ such that $\Hom[*]{\cat{T}}{G}{M}$ a noetherian $R$-module.
For $x \in R$, the composition
\begin{equation*}
\susp^{-1} \kosobj{M}{x^{n+1}} \xrightarrow{\epsilon(x^{n+1})} M \xrightarrow{x^n} \susp^{n|x|} M
\end{equation*}
is $G$-ghost for $n \gg 0$.
\end{lemma}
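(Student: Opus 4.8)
The goal is to show that for $n$ large enough, the composition $\susp^{-1}\kosobj{M}{x^{n+1}} \xrightarrow{\epsilon(x^{n+1})} M \xrightarrow{x^n} \susp^{n|x|} M$ kills every map from a suspension of $G$. So I would fix an arbitrary map $\alpha \colon \susp^{j} G \to \susp^{-1}\kosobj{M}{x^{n+1}}$ and try to show $x^n \circ \epsilon(x^{n+1}) \circ \alpha = 0$. The natural first move is to compose $\alpha$ with $\epsilon(x^{n+1})$ to land in $M$, producing an element of $\Hom[*]{\cat{T}}{G}{M}$; the conclusion I want is that this element is annihilated by $x^n$. Equivalently, I want to understand the image of $\epsilon(x^{n+1})\circ(-) \colon \Hom[*]{\cat{T}}{G}{\susp^{-1}\kosobj{M}{x^{n+1}}} \to \Hom[*]{\cat{T}}{G}{M}$ and show it is killed by $x^n$ for $n \gg 0$.

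\medskip

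The key is the defining exact triangle $\susp^{-1}\kosobj{M}{x^{n+1}} \xrightarrow{\epsilon(x^{n+1})} M \xrightarrow{x^{n+1}} \susp^{(n+1)|x|}M \to \kosobj{M}{x^{n+1}}$. Applying $\Hom[*]{\cat{T}}{G}{-}$ gives a long exact sequence of graded $R$-modules, from which the image of $\epsilon(x^{n+1})_*$ is exactly the submodule of $\Hom[*]{\cat{T}}{G}{M}$ consisting of elements $\beta$ with $x^{n+1}\beta = 0$ (as a map $G \to \susp^{(n+1)|x|}M$, i.e. $x^{n+1}\cdot\beta = 0$ in $\Hom[*]{\cat{T}}{G}{M}$). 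In other words, $\operatorname{im}(\epsilon(x^{n+1})_*) = (0 :_{\Hom[*]{\cat{T}}{G}{M}} x^{n+1})$. So what I really need is: there exists $n$ with $x^n \cdot (0 :_N x^{n+1}) = 0$, where $N \coloneqq \Hom[*]{\cat{T}}{G}{M}$.

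\medskip

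This last statement is a standard consequence of noetherianity. The ascending chain of submodules $(0 :_N x) \subseteq (0 :_N x^2) \subseteq (0 :_N x^3) \subseteq \cdots$ stabilizes since $N$ is noetherian, say at step $n_0$, so $(0 :_N x^{n+1}) = (0 :_N x^{n_0})$ for all $n \geq n_0$. Taking any $n \geq n_0$, an element of $(0 :_N x^{n+1}) = (0 :_N x^{n_0})$ is annihilated by $x^{n_0}$, hence by $x^n$ since $n \geq n_0$. Therefore $x^n \cdot \operatorname{im}(\epsilon(x^{n+1})_*) = 0$, which says precisely that $x^n \circ \epsilon(x^{n+1})$ is $G$-ghost — any map $\susp^j G \to \susp^{-1}\kosobj{M}{x^{n+1}}$ becomes zero after composing with $x^n \circ \epsilon(x^{n+1})$.

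\medskip

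I expect the only real subtlety is bookkeeping the grading: one should check that $x^n \cdot \beta$ means the composite $G \xrightarrow{\beta} \susp^{*}M \xrightarrow{x^n} \susp^{*+n|x|}M$ is zero for \emph{all} graded components of $\beta$ simultaneously, which is fine because the annihilator submodules above are graded and the chain condition applies to graded submodules of the noetherian module $N$. The identification $\operatorname{im}(\epsilon(x^{n+1})_*) = (0:_N x^{n+1})$ from the long exact sequence and the naturality of $x^{n+1}$ acting as $x^{n+1}\cdot\id$ is routine; the noetherian hypothesis on $\Hom[*]{\cat{T}}{G}{M}$ is exactly what makes the argument work.
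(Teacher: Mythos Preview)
Your proof is correct and follows essentially the same route as the paper: apply $\Hom[*]{\cat{T}}{G}{-}$ to the defining triangle to identify the image of $\epsilon(x^{n+1})_*$ with the $x^{n+1}$-torsion in $N = \Hom[*]{\cat{T}}{G}{M}$, then use noetherianity of $N$ to find $n$ with $x^n$ killing this torsion. The paper phrases the noetherian step as ``$\Gamma_{(x)} N$ is finitely generated, hence killed by some $x^n$,'' while you phrase it as stabilization of the chain $(0:_N x) \subseteq (0:_N x^2) \subseteq \cdots$; these are the same observation.
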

\begin{proof}
For convenience, we write $\ch{G}{-} \coloneqq \Hom[*]{\cat{T}}{G}{-}$.
As $\ch{G}{M}$ is noetherian, its submodule of $(x)$-torsion elements
\begin{equation*}
\Gamma_{(x)} \ch{G}{M} = \set{f \in \ch{G}{M} | x^n f = 0 \text{ for some } n \geq 0},
\end{equation*}
is finitely generated.
Hence, there exists $n \geq 0$ such that $x^n \Gamma_{(x)} \ch{G}{M} = 0$.
In particular, this means
\begin{equation*}
\Gamma_{(x)} \ch{G}{M} = \ker(\ch{G}{M} \xrightarrow{x^n} \ch{G}{M}) = \ker(\ch{G}{M} \xrightarrow{x^{n+1}} \ch{G}{M})\,.
\end{equation*}

Applying $\ch{G}{-}$ to the defining exact triangle of $\kosobj{M}{x^{n+1}}$ we obtain the exact sequence
\begin{equation*}
\ch{G}{\susp^{-1} \kosobj{M}{x^{n+1}}} \xrightarrow{\ch{G}{\epsilon(x^{n+1})}} \ch{G}{M} \xrightarrow{x^{n+1}} \ch{G}{\susp^{(n+1)|x|} M}\,.
\end{equation*}
Hence, the first morphism factors through $\Gamma_{(x)} \ch{G}{M}$ and the composition
\begin{equation*}
\ch{G}{\susp^{-1} \kosobj{M}{x^{n+1}}} \xrightarrow{\ch{G}{\epsilon(x^{n+1})}} \ch{G}{M} \xrightarrow{x^n} \ch{G}{\susp^{n|x|} M}
\end{equation*}
is zero and the claim holds.
\end{proof}

From this \namecref{kos_map_G_ghost}, we can construct from a sequence of elements in $R$ a composition of the same length of $G$-ghost maps. 
In general, this composition need not be non-zero. 
To resolve this we consider regular sequences. 

\begin{lemma} \label{kos_map_regular_nonzero}
Let $R$ be a graded-commutative ring and $\cat{T}$ an $R$-linear triangulated category.
If $x_1, \ldots, x_t$ is an $M$-regular sequence, then the composition
\begin{equation} \label{kos_map_regular_nonzero:comp}
\susp^{-t} \kosobj{M}{(x_1^{n_1+1}, \ldots, x_t^{n_t+1})} \xrightarrow{\epsilon(x_1^{n_1+1}) \cdots \epsilon(x_t^{n_t+1})} M \xrightarrow{x_1^{n_1} \cdots x_t^{n_t}} \susp^{n_1 |x_1| + \cdots + n_t |x_t|} M\,.
\end{equation}
is non-zero for any $n_1, \ldots, n_t \geq 0$.
\end{lemma}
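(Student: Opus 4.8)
The plan is to recognise the composition in \eqref{kos_map_regular_nonzero:comp} as a scalar multiple of a map $\epsilon$ that generates a cyclic Hom-module isomorphic to a quotient of $E \coloneqq \Hom[*]{\cat{T}}{M}{M}$ by a monomial ideal built from the regular sequence $x_1,\ldots,x_t$, and then to invoke the elementary fact that the relevant monomial is nonzero there. To set up, put $y_i \coloneqq x_i^{n_i+1}$ and $\mathbf{y} \coloneqq (y_1,\ldots,y_t)$. Since $x_1,\ldots,x_t$ is $M$-regular, equivalently $E$-regular, the sequence of powers $\mathbf{y}$ is $E$-regular as well, and $(\mathbf{y})E \subseteq (x_1,\ldots,x_t)E$ is a proper ideal of $E$; these are standard properties of regular sequences. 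The map $\epsilon \coloneqq \epsilon(y_1)\cdots\epsilon(y_t)$ in \eqref{kos_map_regular_nonzero:comp} is an element of the graded Hom-module $\Hom[*]{\cat{T}}{\susp^{-t}\kosobj{M}{(y_1,\ldots,y_t)}}{M}$, and by $R$-bilinearity of composition the whole composition in \eqref{kos_map_regular_nonzero:comp} is the scalar multiple $x_1^{n_1}\cdots x_t^{n_t}\cdot\epsilon$. So it suffices to show $x_1^{n_1}\cdots x_t^{n_t}\cdot\epsilon \neq 0$.

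The crux is the following claim, which I would prove by induction on $t$: the graded Hom-module $\Hom[*]{\cat{T}}{\susp^{-t}\kosobj{M}{(y_1,\ldots,y_t)}}{M}$ is cyclic over $E$ (acting by postcomposition), generated by $\epsilon$, with annihilator $(\mathbf{y})E$; equivalently, $\epsilon$ induces an isomorphism of graded $E$-modules $E/(x_1^{n_1+1},\ldots,x_t^{n_t+1})E \cong \Hom[*]{\cat{T}}{\susp^{-t}\kosobj{M}{(y_1,\ldots,y_t)}}{M}$, up to a degree shift, sending the class of $1$ to $\epsilon$. The case $t=0$ is immediate ($\epsilon$ is the identity and the module is $E$). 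For the inductive step set $P \coloneqq \kosobj{M}{(y_1,\ldots,y_{t-1})}$, so that $\kosobj{M}{(y_1,\ldots,y_t)} = \kosobj{P}{y_t}$, and apply $\Hom[*]{\cat{T}}{-}{M}$ to the defining exact triangle of $\kosobj{P}{y_t}$. By the inductive hypothesis (applied to the $M$-regular initial segment $x_1,\ldots,x_{t-1}$), $\Hom[*]{\cat{T}}{\susp^{-(t-1)}P}{M} \cong E/(y_1,\ldots,y_{t-1})E$, and in the resulting long exact sequence the map induced by $y_t \colon P \to \susp^{|y_t|}P$ becomes multiplication by $y_t$ on this quotient. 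As $\mathbf{y}$ is $E$-regular, $y_t$ is a nonzerodivisor modulo $(y_1,\ldots,y_{t-1})E$, so this multiplication is injective; a short diagram chase then shows that $\Hom[*]{\cat{T}}{\susp^{-t}\kosobj{M}{(y_1,\ldots,y_t)}}{M}$ is its cokernel, namely $E/(y_1,\ldots,y_t)E$, and that the class of $1$ is hit by $\epsilon(y_1)\cdots\epsilon(y_{t-1})$ precomposed with $\epsilon(y_t)$, that is by $\epsilon$. This completes the induction.

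Granting the claim, $x_1^{n_1}\cdots x_t^{n_t}\cdot\epsilon$ corresponds under the isomorphism to the residue class of $x_1^{n_1}\cdots x_t^{n_t}$ in $E/(x_1^{n_1+1},\ldots,x_t^{n_t+1})E$. Since $x_1,\ldots,x_t$ is $E$-regular with $(x_1,\ldots,x_t)E$ proper, a standard property of regular sequences — they behave like a sequence of indeterminates, so that $x_1^{n_1}\cdots x_t^{n_t}\cdot u \notin (x_1^{n_1+1},\ldots,x_t^{n_t+1})E$ whenever $u \in E$ is not in $(x_1,\ldots,x_t)E$ — applied with $u = \id_M$ shows this residue class is nonzero. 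Hence $x_1^{n_1}\cdots x_t^{n_t}\cdot\epsilon \neq 0$, and therefore the composition in \eqref{kos_map_regular_nonzero:comp} is nonzero, as claimed.

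I expect the main obstacle to be the inductive claim: beyond carefully tracking the grading shifts and the signs in the diagram chase, the essential point is that the regularity hypothesis is used in exactly one place — the injectivity of the connecting maps, equivalently that each $y_i$ acts as a nonzerodivisor modulo $(y_1,\ldots,y_{i-1})E$. The concluding step is then a piece of elementary commutative algebra about regular sequences, which I would quote from the literature rather than reprove.
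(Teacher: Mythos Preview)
Your proposal is correct and follows essentially the same route as the paper's proof: both compute $\Hom[*]{\cat{T}}{\susp^{-t}\kosobj{M}{\mathbf{y}}}{M}$ by induction on $t$ using the defining triangle and the injectivity of multiplication by $y_t$ (from regularity), identify the generator with the canonical map $\epsilon$, and then conclude by observing that the monomial $x_1^{n_1}\cdots x_t^{n_t}$ is nonzero in $E/(x_1^{n_1+1},\ldots,x_t^{n_t+1})E$. The only cosmetic difference is that the paper runs the induction for an arbitrary regular sequence and then specialises to the powers (citing \cite[Theorem~16.1]{Matsumura:1989} for the fact that the powers remain regular), whereas you work directly with the $y_i = x_i^{n_i+1}$.
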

\begin{proof}
For convenience, we write $\hh{M}{-} \coloneqq \Hom[*]{\cat{T}}{-}{M}$.
We set $\cM \coloneqq \hh{M}{M}$. 
This is a graded $R$-module and we denote its twist by
\begin{equation*}
\cM(n) \coloneqq \Hom[*]{\cat{T}}{M}{\susp^{-n} M} = \hh{M}{\susp^{n} M}\,.
\end{equation*}
As $x_1, \ldots, x_t$ is an $M$-regular sequence, it is a $\cM$-regular sequence and $\id_M \notin \langle x_1, \ldots, x_t \rangle \cM$. 

We use induction on $t$ to show that there is a natural isomorphism
\begin{equation*}
% \Hom[*]{\cat{T}}{\susp^{-t} \kosobj{M}{(x_1, \ldots, x_t)}}{M}
\hh{M}{\susp^{-t} \kosobj{M}{(x_1, \ldots, x_t)}} \xrightarrow{\cong} \cM/\langle x_1, \ldots, x_t \rangle \cM
\end{equation*}
that identifies the canonical morphism $\susp^{-t} \kosobj{M}{(x_1, \ldots, x_t)} \to M$ with $[\id_M]$. 

For $t = 0$, there is nothing to show.
For $t > 0$, we apply $\hh{M}{-}$ to the defining exact triangle of $\susp^{-t} \kosobj{M}{(x_1, \ldots, x_t)}$.
This yields a commutative diagram
\begin{equation*}
{\begin{tikzcd}[column sep=small]
0 \ar[r] & \hh{M}{\susp^{|x_t|-t+1} \kosobj{M}{\bmx_{t-1}}} \ar[r,"x_t"] \ar[d,"\cong"] & \hh{M}{\susp^{-t+1} \kosobj{M}{\bmx_{t-1}}} \ar[r,"{\epsilon(x_t)}"] \ar[d,"\cong"] & \hh{M}{\susp^{-t} \kosobj{M}{\bmx_t}} \ar[d,dashed,"\cong"] \ar[r] & 0 \\
0 \ar[r] & (\cM/\langle \bmx_{t-1} \rangle \cM)(|x_t|) \ar[r,"x_t"] & \cM/\langle \bmx_{t-1} \rangle \cM \ar[r] & \cM/\langle \bmx_t \rangle \cM \ar[r] & 0 \nospacepunct{,}
\end{tikzcd}}
\end{equation*}
where $\bmx_t = x_1, \ldots, x_t$ and $\bmx_{t-1} = x_1, \ldots, x_{t-1}$. 
The rows are short exact sequences as $x_t$ is regular on the given modules.
The first two vertical maps exist and are isomorphisms by the induction hypothesis.
Hence, the dashed vertical map exists and is an isomorphism. 
By the induction hypothesis and the commutativity, the canonical morphism $\susp^{-t} \kosobj{M}{(x_1, \ldots, x_t)} \to M$ is identified with $[\id_M]$. 

By \cite[Theorem~16.1]{Matsumura:1989}, the sequence $x_1^{n_1}, \ldots, x_t^{n_t}$ is an $\cM$-regular sequence as well. So we obtain a commutative diagram
\begin{equation*}
\begin{tikzcd}[column sep=large]
\Hom[*]{\cat{T}}{\susp^{-t} \kosobj{M}{\bmx^{\bmn+\bm1}}}{M} \ar[d,"\cong"] \ar[r,"{x_1^{n_1} \cdots x_t^{n_t}}"] & \Hom[*]{\cat{T}}{\susp^{-t} \kosobj{M}{\bmx^{\bmn+\bm1}}}{\susp^{n_1|x_1|+\cdots+n_t|x_t|} M} \ar[d,"\cong"] \\
\cM/\langle \bmx^{\bmn+\bm1} \rangle \cM \ar[r,"{x_1^{n_1} \cdots x_t^{n_t}}"] & (\cM/\langle \bmx^{\bmn+\bm1} \rangle \cM)(n_1 |x_1| + \cdots + n_t |x_t|) \nospacepunct{,}
\end{tikzcd}
\end{equation*}
where $\bmx^{\bmn} = x_1^{n_1}, \ldots, x_t^{n_t}$ and $\bmx^{\bmn+\bm1} = x_1^{n_1+1}, \ldots, x_t^{n_t+1}$. 
The canonical morphism $\susp^{-t} \kosobj{M}{\bmx^{\bmn+\bm1}} \to M$ in the top left corner is mapped to the composition \cref{kos_map_regular_nonzero:comp} in the top right corner. 
Along the isomorphism it identifies with $[x_1^{n_1} \cdots x_t^{n_t}]$ in the bottom right corner.
Since $x_1, \ldots, x_t$ is $\cM$-regular, we have $[x_1^{n_1} \cdots x_t^{n_t}] \neq 0$ in $\cM/\langle x_1^{n_1+1}, \ldots, x_t^{n_t+1} \rangle \cM$. 
This yields the claim.
\end{proof}

Combining \cref{kos_map_G_ghost,kos_map_regular_nonzero} we obtain a lower bound for generation time.

\begin{proposition} \label{depth_leq_gentime}
Let $R$ be a graded-commutative ring and $\cat{T}$ an $R$-linear triangulated category.
Let $G,M \in \cat{T}$ such that $\Hom[*]{\cat{T}}{G}{M}$ is noetherian as an $R$-module and $\fa \subseteq R$ an ideal with $\fa \Hom[*]{\cat{T}}{M}{M} \neq \Hom[*]{\cat{T}}{M}{M}$.
Then
\begin{equation*}
\depth_R(\fa,\Hom[*]{\cat{T}}{M}{M}) \leq \gentime_\cat{T}(G)\,.
\end{equation*}
\end{proposition}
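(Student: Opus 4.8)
The plan is to build, from an $M$-regular sequence in $\fa$ of length $s$, a non-zero composition of $s$ $G$-ghost maps; by the ghost lemma this forces $\gentime_{\cat{T}}(G) \geq s$. Since a prefix of an $M$-regular sequence is again $M$-regular, if $\depth_R(\fa, \Hom[*]{\cat{T}}{M}{M})$ is finite it suffices to treat one sequence of length exactly $s = \depth_R(\fa, \Hom[*]{\cat{T}}{M}{M})$, while if the depth is infinite we run the argument for every finite $s$ and conclude $\gentime_{\cat{T}}(G) = \infty$ as well.

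So fix an $M$-regular sequence $x_1, \dots, x_s \in \fa$. I would choose exponents $n_1, \dots, n_s \geq 0$ recursively together with the objects $N_0 \coloneqq M$ and $N_j \coloneqq \kosobj{M}{(x_1^{n_1+1}, \dots, x_j^{n_j+1})} = \kosobj{N_{j-1}}{x_j^{n_j+1}}$. Assume $n_1, \dots, n_{j-1}$ are chosen and $\Hom[*]{\cat{T}}{G}{N_{j-1}}$ is noetherian. Apply \cref{kos_map_G_ghost} with $M$ replaced by $N_{j-1}$ to pick $n_j \gg 0$ so that
\begin{equation*}
\susp^{-1} \kosobj{N_{j-1}}{x_j^{n_j+1}} \xrightarrow{\epsilon(x_j^{n_j+1})} N_{j-1} \xrightarrow{x_j^{n_j}} \susp^{n_j |x_j|} N_{j-1}
\end{equation*}
is $G$-ghost. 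That $\Hom[*]{\cat{T}}{G}{N_j}$ is again noetherian follows from the long exact sequence obtained by applying $\Hom[*]{\cat{T}}{G}{-}$ to the defining exact triangle of $\kosobj{N_{j-1}}{x_j^{n_j+1}}$: it presents $\Hom[*]{\cat{T}}{G}{N_j}$ as an extension of a submodule and a quotient of suspensions of $\Hom[*]{\cat{T}}{G}{N_{j-1}}$, and noetherian $R$-modules are closed under suspensions, submodules, quotients and extensions. Starting from the hypothesis on $\Hom[*]{\cat{T}}{G}{M}$, this recursion runs through $j = 1, \dots, s$.

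Since $\epsilon(x_j^{n_j+1}) \colon \susp^{-1} N_j \to N_{j-1}$, the $G$-ghost maps produced above, suitably suspended, splice into a chain
\begin{equation*}
M_0 = \susp^{-s} N_s \to M_1 \to \cdots \to M_{s-1} \to M_s\,,
\end{equation*}
where each $M_k$ is a suspension of $N_{s-k}$ and each arrow is $G$-ghost. I would then identify the total composition $M_0 \to M_s$ with the map in \cref{kos_map_regular_nonzero:comp}, up to a unit sign and an overall suspension: the $k$-th link is a suspension of the multiplication $x_j^{n_j}$ precomposed with $\epsilon(x_j^{n_j+1})$, with $j = s-k+1$, and because multiplication by a homogeneous $x \in R$ is a natural transformation $\id_{\cat{T}} \to \susp^{|x|}$---whereas the maps $\epsilon$ need not be natural---all the multiplication maps can be slid to one end, past the $\epsilon$'s and past one another, at the cost only of signs from graded-commutativity. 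By \cref{kos_map_regular_nonzero} this composition is non-zero, hence so is the composite of the chain. The ghost lemma now yields $\level_{\cat{T}}^G(M_0) > s$, and therefore $\gentime_{\cat{T}}(G) \geq \level_{\cat{T}}^G(M_0) - 1 \geq s$.

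The propagation of Ext-noetherianity and the bookkeeping of suspensions are routine. The step I expect to be the real obstacle is the identification of the spliced composition with the map of \cref{kos_map_regular_nonzero:comp}: the individual Koszul maps $\epsilon(x_j^{n_j+1})$ are not $G$-ghost---which is precisely why they must be interleaved with the multiplications $x_j^{n_j}$---and are not natural, so the rearrangement has to be arranged to rest solely on the naturality of the multiplication maps, and one must check that the accumulated sign is a unit and hence cannot make the composite vanish.
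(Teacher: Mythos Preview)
Your proposal is correct and follows essentially the same approach as the paper: build $G$-ghost maps from \cref{kos_map_G_ghost} applied to successive Koszul objects, slide the multiplications to one end using naturality of the $R$-action so that the total composition becomes the map of \cref{kos_map_regular_nonzero}, and invoke the ghost lemma. You are in fact more careful than the paper on two points---you explicitly justify that $\Hom[*]{\cat{T}}{G}{N_{j-1}}$ stays noetherian (the paper tacitly assumes this when applying \cref{kos_map_G_ghost} to the intermediate Koszul objects), and you flag the sign bookkeeping---but these are refinements of the same argument, not a different route.
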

\begin{proof}
Let $\bmx = x_1, \ldots, x_t$ be an $M$-regular sequence in $\fa$.
By \cref{kos_map_G_ghost}, there exists $\bmn = (n_1, \ldots, n_t) \in \BN_0^t$ such that
\begin{equation*}
\susp^{-s} \kosobj{M}{\bmx_s^{\bmn_s+\bm1}} \xrightarrow{\epsilon(x_s^{n_s+1})} \susp^{-s+1} \kosobj{M}{\bmx_{s-1}^{\bmn_{s-1}+\bm1}} \xrightarrow{x_s^{n_s}} \susp^{n_s|x_s|-s+1} \kosobj{M}{\bmx_{s-1}^{\bmn_{s-1}+\bm1}}
\end{equation*}
is $G$-ghost for any $1 \leq s \leq t$. Here $\bmx_s^{\bmn_s+\bm1}$ stands for the sequence $x_1^{n_1+1}, \ldots, x_s^{n_s+1}$.

As $x_1, \ldots, x_t$ act naturally on $\cat{T}$, multiplication by $x_r^{n_r}$ commutes with the map $\epsilon(x_s^{n_s+1})$ for any $1 \leq r,s \leq t$. Hence the composition
\begin{equation*}
\susp^{-t} \kosobj{M}{(x_1^{n_1+1}, \ldots, x_t^{n_t+1})} \xrightarrow{\epsilon(x_t^{n_t+1}) \cdots \epsilon(x_1^{n_1+1})} M \xrightarrow{x_1^{n_1} \cdots x_t^{n_t}} \susp^{n_1 |x_1| + \cdots + n_t |x_t|} M
\end{equation*}
is the composition of the $t$ morphisms above, each of which is $G$-ghost.
As $x_1, \ldots, x_t$ is $M$-regular, this composition is non-zero by \cref{kos_map_regular_nonzero}. 

Now the ghost lemma yields
\begin{equation*}
t \leq \level_\cat{T}^G(M)-1 \leq \gentime_\cat{T}(G)\,;
\end{equation*}
see \cref{sec:ghost}. 
As $\fa \Hom[*]{\cat{T}}{M}{M} \neq \Hom[*]{\cat{T}}{M}{M}$ we obtain
\begin{equation*}
\depth_R(\fa,\Hom[*]{\cat{T}}{M}{M}) \leq \gentime_\cat{T}(G)\,. \qedhere
\end{equation*}
\end{proof}

As the \namecref{depth_leq_gentime} holds for any object $G$, we obtain a lower bound for Rouquier dimension. 

\begin{theorem} \label{depth_leq_Rdim}
Let $R$ be a graded-commutative ring and $\cat{T}$ an Ext-noetherian $R$-linear triangulated category.
Let $M \in \cat{T}$ and $\fa \subseteq R$ an ideal with $\fa \Hom[*]{\cat{T}}{M}{M} \neq \Hom[*]{\cat{T}}{M}{M}$.
Then
\begin{equation*}
\depth_R(\fa,\Hom[*]{\cat{T}}{M}{M}) \leq \Rdim \cat{T}\,. \pushQED {\qed} \qedhere \popQED
\end{equation*}
\end{theorem}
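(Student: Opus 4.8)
The plan is to obtain \cref{depth_leq_Rdim} as an immediate consequence of \cref{depth_leq_gentime} by passing to the infimum over generators, so essentially no new work is required beyond a short formal argument.

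First I would unpack the Ext-noetherian hypothesis on $\cat{T}$: by definition it says that $\Hom[*]{\cat{T}}{G}{N}$ is a noetherian $R$-module for \emph{all} objects $G, N \in \cat{T}$, and in particular $\Hom[*]{\cat{T}}{G}{M}$ is noetherian for every generator $G$ of $\cat{T}$. Thus every generator $G$ satisfies the hypotheses of \cref{depth_leq_gentime}, the remaining hypothesis $\fa \Hom[*]{\cat{T}}{M}{M} \neq \Hom[*]{\cat{T}}{M}{M}$ being already assumed in the statement of the theorem. Applying \cref{depth_leq_gentime} therefore gives
\begin{equation*}
\depth_R(\fa,\Hom[*]{\cat{T}}{M}{M}) \leq \gentime_\cat{T}(G)
\end{equation*}
for every generator $G$ of $\cat{T}$.

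The second and final step is to take the infimum of the right-hand side over all generators $G$. Since the left-hand side does not depend on $G$, this yields $\depth_R(\fa,\Hom[*]{\cat{T}}{M}{M}) \leq \inf\set{\gentime_\cat{T}(G) | G \text{ a generator of } \cat{T}} = \Rdim(\cat{T})$, which is the claim. In the degenerate case that $\cat{T}$ has no generator, the infimum over the empty set is $+\infty$ and the inequality holds vacuously, so no separate case distinction is needed; similarly, if the depth is $+\infty$ because $\fa\Hom[*]{\cat{T}}{M}{M} = \Hom[*]{\cat{T}}{M}{M}$ this is excluded by hypothesis.

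I do not expect any genuine obstacle here: all of the substance of the result already lives in \cref{kos_map_G_ghost} and \cref{kos_map_regular_nonzero}, combined in \cref{depth_leq_gentime}, and the passage from a fixed generator to the Rouquier dimension is purely formal. The only point worth recording carefully is that the Ext-noetherianity assumption is precisely the uniform-in-$G$ strengthening of the ``$\Hom[*]{\cat{T}}{G}{M}$ noetherian'' hypothesis of \cref{depth_leq_gentime}; once this is noted, the proof is a single line.
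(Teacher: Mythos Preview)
Your proposal is correct and matches the paper's approach exactly: the theorem is stated there with a \qed and no separate proof, the only justification being the sentence ``As the \namecref{depth_leq_gentime} holds for any object $G$, we obtain a lower bound for Rouquier dimension.'' Your unpacking of the Ext-noetherian hypothesis and the infimum over generators is precisely the intended one-line argument.
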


%%%%%%%%%%%%%%%%%%%%%%%%%%%%%%%%%%%%%%%%%%%%%%%%%%%%%%%%%%%
%%%%%%%%%%%%%%%%%%%%%%%%%%%%%%%%%%%%%%%%%%%%%%%%%%%%%%%%%%%
\section{Regular rings}

Let $A$ be a commutative noetherian regular ring.
Then $A$, viewed as a graded ring concentrated in degree zero, acts on the bounded derived category of finitely generated $A$-modules $\dbcat{\mod{A}}$.
As $A$ is regular, the bounded derived category $\dbcat{\mod{A}}$ of finitely generated modules coincides with the category of perfect $A$-complexes. 
This category is Ext-noetherian as an $A$-linear triangulated category. 
The ring $A$, viewed as a complex concentrated in degree 0, is a generator of $\dbcat{\mod{A}}$. 
By \cite[Proposition~7.15]{Rouquier:2008} and \cite[Proposition~2.6]{Krause/Kussin:2006}, the generation time of $A$ is
\begin{equation} \label{Rdim_leq_dim}
\gentime_{\dbcat{\mod{A}}}(A) = \dim(A)\,.
\end{equation}
Combining this with \cref{depth_leq_Rdim} we obtain our main result.

\begin{theorem} \label{Rdim_regular_ring}
Let $A$ be a commutative noetherian regular ring. Then 
\begin{equation*}
\Rdim(\dbcat{\mod{A}}) = \dim(A)\,.
\end{equation*}
\end{theorem}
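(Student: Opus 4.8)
The strategy is to sandwich $\Rdim(\dbcat{\mod A})$ between $\dim(A)$ from above and below, with the two bounds coming from completely separate sources. The upper bound is immediate: since $A$ is a generator of $\dbcat{\mod A}$, we have $\Rdim(\dbcat{\mod A}) \leq \gentime_{\dbcat{\mod A}}(A)$ by definition of Rouquier dimension, and $\gentime_{\dbcat{\mod A}}(A) = \dim(A)$ is the computation recorded in \eqref{Rdim_leq_dim} (from Rouquier and Krause--Kussin). So the real content is the reverse inequality $\dim(A) \leq \Rdim(\dbcat{\mod A})$.

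For the lower bound I would invoke \cref{depth_leq_Rdim} with $R = A$ (concentrated in degree zero, so the graded-commutative hypothesis is trivially satisfied), $\cat{T} = \dbcat{\mod A}$, which is Ext-noetherian as an $A$-linear triangulated category since $A$ is regular and the Hom-sets are finitely generated $A$-modules, and $M = A$ itself as a complex in degree zero. Then $\Hom[*]{\cat{T}}{A}{A} = A$ as a graded ring. The point is to choose the ideal $\fa$ so that $\depth_A(\fa, A)$ is as large as possible, namely equal to $\dim(A)$: I would take $\fa = \fm$ a maximal ideal of $A$ of height $\dim(A)$ (such a maximal ideal exists since $A$ is noetherian and $\dim(A)$ is the supremum of heights of maximal ideals). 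Since $\fm A = \fm \neq A$, the hypothesis $\fa \Hom[*]{\cat{T}}{M}{M} \neq \Hom[*]{\cat{T}}{M}{M}$ holds.

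The key input is that a regular ring is in particular Cohen--Macaulay, so localizing at $\fm$ gives $\depth_{A_\fm}(A_\fm) = \dim(A_\fm) = \height(\fm) = \dim(A)$; a maximal regular sequence on $A_\fm$ inside $\fm A_\fm$ can be taken to consist of elements of $A$ (clear denominators, or note that $\depth_A(\fm, A)$ computed via $\Ext_A^*(A/\fm, A)$ localizes), so $\depth_A(\fm, A) = \dim(A)$ as well. Feeding this into \cref{depth_leq_Rdim} yields $\dim(A) = \depth_A(\fm, A) \leq \Rdim(\dbcat{\mod A})$. Combining the two inequalities gives equality.

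The main obstacle — really the only subtle point — is confirming that the $A$-depth of $\fm$ on $A$ genuinely equals $\dim(A)$ rather than merely bounding it, i.e. that regularity forces the global equality $\depth_A(\fm,A) = \height(\fm)$ and that one can realize a length-$\dim(A)$ $A$-regular sequence honestly inside $\fm \subseteq A$ (not just in $\fm A_\fm$). This is standard commutative algebra: a regular local ring is Cohen--Macaulay, so a regular system of parameters of $A_\fm$ is an $A_\fm$-regular sequence of length $\dim(A_\fm)$, and since regular sequences and depth behave well under localization at $\fm$ one recovers an $A$-regular sequence in $\fm$ of the same length. Everything else — the Ext-noetherian hypothesis, the generator property of $A$, the behavior of $\Hom[*]{\cat{T}}{A}{A}$ — is either already recorded in the excerpt or formal.
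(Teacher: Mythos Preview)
Your argument is correct in the finite-dimensional case and is in fact slightly more direct than the paper's. The paper first establishes the local case via \cref{depth_leq_Rdim} and then, for non-local $A$ with $\dim(A)<\infty$, picks a maximal ideal $\fm$ with $\dim(A_\fm)=\dim(A)$ and invokes the fact that localization does not increase generation time to get $\dim(A)=\dim(A_\fm)\leq \gentime_{\dbcat{\mod{A_\fm}}}(G_\fm)\leq \gentime_{\dbcat{\mod{A}}}(G)$ for every generator $G$. You instead apply \cref{depth_leq_Rdim} directly to the non-local ring with $\fa=\fm$, which bypasses the localization-of-levels step. One caution: your ``clear denominators'' heuristic for lifting a regular sequence from $A_\fm$ to $A$ is not valid in general---an element regular on $A_\fm$ can be a zerodivisor on $A$---but your alternative justification via $\operatorname{Ext}_A^*(A/\fm,A)$ is correct and yields $\depth_A(\fm,A)=\dim(A_\fm)$.

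There is, however, a genuine gap: you do not treat the case $\dim(A)=\infty$. In a noetherian ring every prime ideal has finite height, so when $\dim(A)=\infty$ there is \emph{no} maximal ideal of height $\dim(A)$, contrary to your parenthetical claim, and your choice of $\fm$ fails. The paper handles this separately by observing that $A$ is then a generator which is not strong, so $\dbcat{\mod{A}}$ has no strong generator and $\Rdim(\dbcat{\mod{A}})=\infty$. Your approach patches just as easily: for every $n$ there is a maximal ideal $\fm$ with $\dim(A_\fm)\geq n$, whence $\Rdim(\dbcat{\mod{A}})\geq \depth_A(\fm,A)=\dim(A_\fm)\geq n$ for all $n$.
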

\begin{proof}
First, we show the claim for a regular local ring $A$ with maximal ideal $\fm$. We consider $A$ as a complex concentrated in degree zero. As $A = \Hom[*]{\dbcat{\mod{A}}}{A}{A}$, \cref{depth_leq_Rdim} yields
\begin{equation*}
\begin{aligned}
\dim(A) = \depth_A(\fm,A) &\leq \Rdim(\dbcat{\mod{A}}) \\
&\leq \gentime_{\dbcat{\mod{A}}}(A) = \dim(A)\,.
\end{aligned}
\end{equation*}
Hence, equality holds.

Now, let $R$ be a regular ring, not necessarily local. 
When $\dim(A) = \infty$, then $A$ is a generator of $\dbcat{\mod{A}}$, through not a strong generator. Hence $\dbcat{\mod{A}}$ has no strong generator and $\Rdim(\dbcat{\mod{A}}) = \infty$. 

We now assume $\dim(A) < \infty$.
Then there exists a maximal ideal $\fm$ of $A$ such that $\dim(A) = \dim(A_\fm)$. 
For any strong generator $G$ of $\dbcat{\mod{A}}$ one has
\begin{equation*}
\dim(A) = \dim(A_\fm) \leq \gentime_{\dbcat{\mod{A_\fm}}}(G_\fm) \leq \gentime_{\dbcat{\mod{A}}}(G)\,;
\end{equation*}
for the second inequality see \cite[Section~3]{Letz:2021}. 
Hence, $\dim(A)$ provides a lower bound for the Rouquier dimension of $\dbcat{\mod{A}}$. Combined with \cref{Rdim_leq_dim} this yields the claim.
\end{proof}

\bibliographystyle{amsalpha}
\bibliography{refs}

\end{document}